\newcommand{\bbZ}{{\mathbb{Z}}}
\newcommand{\Hom}{{\mathrm{Hom}}}
\newcommand{\Ind}{{\mathrm{Ind}}}
\newcommand{\ind}{{\mathrm{ind}}}
\newcommand{\SL}{{\mathrm{SL}}}
\newtheorem{thm}{Theorem}[section]
\newtheorem{lemma}[thm]{Lemma}
\newtheorem{prop}[thm]{Proposition}
\newtheorem{cor}[thm]{Corollary}
\def\Z{{\mathbb Z}}
\def\R{{\mathbb R}}
\def\presuper#1#2%
\begin{document}


\title[]{Bounded contractions for affine buildings}

\author{Mladen Bestvina and Gordan Savin}

\address{Department of Mathematics, University of Utah, Salt Lake City, UT 84112}
\thanks{M.B. partially supported by NSF grant DMS-1607236.} 
\thanks{G.S. partially supported by NSF grant DMS-1359774.} 

\email{bestvina@math.utah.edu}

\email{savin@math.utah.edu}

\begin{abstract} 
We consider affine buildings with refined chamber structure. For each vertex $x$  we construct a contraction, based at $x$, 
 that is used to prove exactness of Schneider-Stuhler resolutions of arbitrary depth. 
\end{abstract}

\maketitle

\section{Introduction} 

Let $k$ be a local non-archimedean field and $G$ the group of
$k$-points of an algebraic reductive group defined over $k$.  Let $X$
be the Bruhat-Tits building attached to $G$, which we view as a
$CAT(0)$ metric
space. In \cite{MP2}, to every $x\in X$ and non-negative real number
$r$, Moy and Prasad attach a subgroup $G_{x,r}$ such that
$G_{x}:=G_{x,0}$ is the parahoric subgroup of $G$ attached to $x$,
$G_{x,s} \subseteq G_{x,r}$ whenever $r\leq s$, and $G_{x,r}$ are
normal subgroups of $G_x$.  For every $r\geq 0$ we let
$G_{x,r,+}=\cup_{s>r} G_{x,s}$. The building $X$ has a (standard)
structure of a chamber complex such that, for every facet $\sigma$ and
$r$ an integer, the function $x\mapsto G_{x,r,+}$ is constant for $x$
in the interior of $\sigma$. Let $r$ be a rational number.  Then the
chamber structure can be refined so that $x\mapsto G_{x,r,+}$ is
constant on the interior points of each facet \cite{BKW}. For example,
if $G=\SL_2(k)$, the building $X$ is a tree and if we divide each edge
into two, of equal lengths, then $x\mapsto G_{x,r,+}$ is constant on
the interior points of each edge if $r$ is half-integral.  Fix $r$ and
a refined chamber structure on $X$.  Let $C_i(X)$ be the free abelian
group with the basis consisting of the $i$-dimensional facets of
$X$. Let $C_{-1}(X)=\mathbb Z$.  Let $x\in X$ be a vertex.  Recall
that a {\it contraction} $c$ of $C_*(X)$ based at $x$ is a sequence of
homomorphisms $c_i:C_i(X)\to C_{i+1}(X)$, $i=-1,0,1,\cdots$, such that
$c_{-1}(1)=x$ and $c_{i-1}\partial+\partial c_i=1$. Our main technical
result is the construction of a contraction of $\sigma\to
c(\sigma)=\sum c(\sigma,\tau)\tau$ with the following properties:
\begin{enumerate} 
\item the contraction $c$ is $G_x$-equivariant. 
\item if $c(\sigma,\tau)\neq 0$ then $\tau$ is contained in the smallest subcomplex of $X$ containing the cone with the vertex $x$ and the base $\sigma$. 
\item the coefficients $c(\sigma,\tau)$ are uniformly bounded. 
\end{enumerate} 
A more detailed meaning of (2) is the following. The cone is the union of all geodesic segments connecting $x$ to a point in $\sigma$. If $c(\sigma,\tau)\neq 0$ then there exists a point 
$y\in \sigma$ and a point $z$ on the geodesic $[x,y]$ such that $z$ is either an interior point of $\tau$ or $\tau$ is in the boundary of a facet containing $z$ as an interior point.

Let $V$ be a smooth representation of $G$ of depth $r$. Following Schneider-Stuhler  \cite{SS} 
we define a projective resolution of $V$ using the  chain complex
$C_*(X)$.   Since we use a refined chamber structure on $X$ the
projective modules in the resolution have the same depth $r$. The
exactness of resolution is a simple consequence of the existence of the contraction $c$ and niceness of Moy-Prasad groups $G_{x,r}$ if $r>0$. 
 An open compact subgroup $K$ of $G$ is nice if, for any smooth 
representation $V$ of $G$ generated by its $K$-fixed vectors, any subquotient of $V$ is generated by its $K$-fixed vectors. It is recorded and used in the literature that $G_{x,r}$ is nice if $x$ is a special point in the building. Niceness of all positive depth Moy-Prasad groups is possibly known to experts, however, we have not found a precise reference in the literature and hence have included a proof. We thank C. Bushnell and G. Henniart for a discussion on this matter. 
The property (3) is not used in this paper, however, it is critical to prove that the resolution stays exact after passing to a Schwartz completion, see \cite{OS} for details.

\section{Lipschitz simplicial approximation}

We fix a cell structure $A$ on $\R^n$ so that each (closed) cell
$\sigma$ is a convex polyhedron, and when $\tau\subset \sigma$ are
cells in $A$ then
$\tau$ is a polyhedral face of $\sigma$. For simplicity, we will
assume that $A$ is invariant under a discrete group of translations of
rank $n$, although the arguments below apply when $A$ has bounded
combinatorics and contains only finitely many isometry types of
cells. We also fix an orientation of each cell.  By $A^i$ we denote
the $i$-skeleton of $A$. Consider the augmented cellular chain complex
$C_*(A)$ of $A$ where $C_i(A)$ is the free abelian group with the
basis consisting of the $i$-cells of $A$, and $C_{-1}(A)=\bbZ$. The
boundary morphisms are defined by
$$\partial \sigma=\sum_\tau \epsilon(\sigma,\tau)\tau$$
where the sum runs over codimension 1 faces $\tau$ of $\sigma$ and
$\epsilon(\sigma,\tau)=1$ if the orientation of $\sigma$ is obtained
from the orientation of $\tau$ by appending an outward normal to
$\tau$, and otherwise $\epsilon(\sigma,\tau)=-1$.

We
fix a vertex $v_0$ of $A^0$. Without loss of generality we can assume
that $v_0=0$.  Recall that a {\it contraction} $c$ of $C_*(A)$ based
at $v_0$ is a sequence of homomorphisms $c_i:C_i(A)\to C_{i+1}(A)$,
$i=-1,0,1,\cdots$, where $c_{-1}(1)=v_0$, such that
$c_{i-1}\partial+\partial c_i=1$. The goal of this section is to
construct $\sigma\to c(\sigma)=\sum c(\sigma,\tau)\tau$ with the
coefficients $c(\sigma,\tau)$ uniformly bounded and
$c(\sigma,\tau)\neq 0$ only if $\tau$ is contained in every convex
subcomplex of $A$ containing $v_0$ and $\sigma$.
  
The standard method of deforming a map $f:\R^k\to \R^n$ so it misses a
given cell $\sigma$ of dimension $>k$ is to radially project to
$\partial\sigma$ from a
point $x_\sigma$ in the interior of $\sigma$ and in the complement of the image
of $f$. For the purposes of controlling the coefficients
$c(\sigma,\tau)$ in the contraction we need to control the Lipschitz
constant, which will dramatically increase if $x_\sigma$ is close to
the image of $f$.

\begin{prop}\label{1}
For every $N,L>0$ there is $L'>0$ with the following properties. Let
$D\subset \R^k$ be a polyhedral cell and $f:D\to \R^n$ a
map such that 
\begin{itemize}
\item $f$ is $L$-Lipschitz,
\item $f(\partial D)\subset A^{k-1}$,
\item $f(D)\cap\sigma$ is contained in the union of $N$ $k$-planes for
  every cell $\sigma$ of $A$.
\end{itemize}
Then there is a map $g:D\to \R^n$ such that
\begin{enumerate}[(i)]
\item $g=f$ on $\partial D$, and on $f^{-1}(A^{k-1})$,
\item $g$ is $L'$-Lipschitz,
\item for every $x\in D$, $g(x)$ belongs to the closure of the open
  cell of $A$ that contains $f(x)$,
\item $g(D) \subset A^k$. 
\end{enumerate}
\end{prop}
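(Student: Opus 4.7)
The strategy is to build $g$ as a composition of radial projections, one per cell of $A$ of dimension $>k$, processed in decreasing order of dimension. Concretely, I set $f_n:=f$; for $m=n,n-1,\dots,k+1$ and each $m$-cell $\sigma$ I choose a ``center'' $x_\sigma$ in the interior of $\sigma$ that is uniformly far from the current image, let $P_\sigma:\sigma\setminus\{x_\sigma\}\to\partial\sigma$ be radial projection from $x_\sigma$, and define $f_{m-1}(x):=P_\sigma(f_m(x))$ if $f_m(x)$ lies in the interior of an $m$-cell $\sigma$, and $f_{m-1}(x):=f_m(x)$ otherwise. The desired map is $g:=f_k$, whose image lies in $A^k$ by construction.

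The choice of $x_\sigma$ is where the hypotheses are used, and the argument is inductive in $m$. I maintain the invariant that $f_m(D)\cap\sigma$ lies in a union of at most $N_m$ $k$-planes for every cell $\sigma$, with $N_n=N$. For the propagation, the radial image of a $k$-plane $H\subset\sigma$ from $x_\sigma$ lies in the $(k+1)$-affine span $H':=\mathrm{aff}(H\cup\{x_\sigma\})$, and intersecting with a proper face $\tau\subsetneq\sigma$ gives $H'\cap\mathrm{aff}(\tau)$; provided $x_\sigma$ satisfies the (finite codimension-one) general-position condition $\dim(H'\cap\mathrm{aff}(\tau))\le k$ for each such pair $(H,\tau)$, this intersection lies in a single $k$-plane of $\tau$. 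Summing over the bounded number of cofaces gives $N_{m-1}\le C\cdot N_m$ for a combinatorial constant $C$, hence $N_m\le C^{n-m}N$ is uniformly bounded throughout the iteration. Given this, a $\delta$-neighborhood of the union of $N_m$ $k$-planes in $\sigma$ has $m$-dimensional Lebesgue measure at most $C'N_m\delta^{m-k}$; choosing $\delta_m$ small enough (depending only on $N$ and $n$) I can find $x_\sigma$ in the interior of $\sigma$ at distance at least $\delta_m$ from $f_m(D)\cap\sigma$ that also satisfies the genericity condition.

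Each $P_\sigma$, restricted to $\{y\in\sigma:|y-x_\sigma|\ge\delta_m\}$, is Lipschitz with constant bounded by a geometric constant divided by $\delta_m$, and equals the identity on $\partial\sigma$. The global Lipschitz bound for $f_{m-1}$ then follows from a segment-subdivision argument: for $x_1,x_2\in D$ with $f_m(x_1)$ and $f_m(x_2)$ in distinct cells, I subdivide the straight segment $[f_m(x_1),f_m(x_2)]\subset\R^n$ at its cell-boundary crossings; the piecewise image under the various $P_\sigma$ matches continuously at each crossing (since $P_\sigma$ fixes $\partial\sigma$) and has total length at most $(C''/\delta_m)\,|f_m(x_1)-f_m(x_2)|$. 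Composing across $m=n,n-1,\dots,k+1$ yields the uniform Lipschitz constant $L'$ for $g=f_k$.

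Properties (i), (iii), (iv) then follow readily. A point $x$ with $f(x)\in A^{k-1}$ is not in the interior of any cell of dimension $>k$ and is therefore fixed by every $P_\sigma$, whence $g(x)=f(x)$; combined with $f(\partial D)\subset A^{k-1}$ this yields $g=f$ on $\partial D$ and on $f^{-1}(A^{k-1})$. Each radial projection sends $\mathrm{int}(\sigma)$ into $\overline\sigma$, establishing (iii), and $g(D)\subset A^k$ by construction. The main obstacle is the combined bookkeeping of the second paragraph: without the general-position condition on $x_\sigma$ the ``union of $N$ $k$-planes'' structure could degenerate (a projected plane could fill an entire $(k+1)$-face), which would both break the induction on $N_m$ and destroy the measure argument that guarantees the existence of a sufficiently far $x_\sigma$ at subsequent levels.
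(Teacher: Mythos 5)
Your proof follows essentially the same approach as the paper's: descend through the skeleta by radially projecting each $m$-cell from a center $x_\sigma$ chosen by a volume argument to be uniformly far from the current (codimension-positive) image, while tracking the number of $k$-planes containing the image in each cell and the Lipschitz constant via the $d/\epsilon$ estimate for radial projection. The one caveat you raise as potentially necessary---that $x_\sigma$ be in general position so that $\mathrm{aff}(H\cup\{x_\sigma\})\not\subset\mathrm{aff}(\tau)$ for proper faces $\tau$---is in fact automatic: $x_\sigma$ lies in the interior of $\sigma$, while $\mathrm{aff}(\tau)$ is a supporting affine subspace of the convex polytope $\sigma$ and therefore disjoint from $\mathrm{int}(\sigma)$, so $x_\sigma\notin\mathrm{aff}(\tau)$ and the projected image of a $k$-plane meets each face in at most a $k$-plane without any extra genericity having to be arranged.
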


\begin{proof}
We inductively construct maps $f=g_n,g_{n-1},\cdots,g_k=g:D\to\R^n$
satisfying (i), (iv), (modified) (ii)-(iii): $g_i(D)\subset A^i$,
$g_i$ is $L_i$-Lipschitz, and in addition $g_i(D)\cap\sigma$ is
contained in the union of $N_i$ $i$-planes. The construction from
$g_i$ to $g_{i-1}$ is as follows.

By volume considerations, there is $\epsilon>0$ that depends only on
$N_i$ and $A$, so that for every $i$-cell $\sigma$ there is a point
$x_\sigma\in \overset{\circ}\sigma$ at distance $>\epsilon$ from
$g_i(D)\cap\sigma$ and also distance $>\epsilon$ from
$\partial\sigma$. Compose $g_i$ with the map which is the radial
projection from $x_\sigma$ to $\partial\sigma$ on $\sigma$ for each
$i$-simplex $\sigma$. This composition is $g_{i-1}$. The Lipschitz
constant $L_{i-1}$ of $g_{i-1}$ is uniformly bounded, more precisely, 
\[ 
L_{i-1} \leq \left(\frac{d}{\epsilon}\right)^2 L_{i}
\] 
where $d$ is a bound on the diameter of all simplices, and
$N_{i-1}\leq N_iM_iP_i$, where $M_i$ is the maximal number of
codimension 1 faces of a cell in $A$ and $P_i$ is the maximal number
of cells intersecting a given cell.
\end{proof}

\begin{prop} \label{P:lip} 
Let $v_0=0$ be a vertex of $A$. For every cell $\sigma$ of $A$
there is a homotopy $H_\sigma:\sigma\times [0,D_\sigma]\to\R^n$, with
$D_\sigma=\mathrm{diam} (\sigma\cup\{v_0\})$, so that the following holds.
\begin{enumerate}[(1)]
\item $H_\sigma(x,0)=v_0$ for every $x\in\sigma$,
\item $H_\sigma(\cdot,1)$ is the inclusion,
\item the image of $H_\sigma$ is contained in $A^{\dim(\sigma)+1}$,
  and it is also contained in every convex subcomplex of $A$ that
  contains $v_0$ and $\sigma$,
\item the restriction of $H_\sigma$ to $\tau\times [0,D_\sigma]$ for a
  face $\tau<\sigma$ is the linear reparametrization of $H_\tau$,
\item $H_\sigma$ is uniformly Lipschitz,
\item $H_\sigma$ is uniformly bounded distance away from the straight
  line homotopy $(x,t)\mapsto tx/D_\sigma$.
\end{enumerate}
\end{prop}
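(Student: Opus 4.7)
The plan is to induct on $\dim(\sigma)$, using the straight-line homotopy $F_\sigma(x,t) = tx/D_\sigma$ as a guide and Proposition~\ref{1} as the tool to push its image into the low-dimensional skeleton. The base case $\dim\sigma=0$ with $\sigma=\{v_0\}$ is trivial; for any other vertex $v$ I would apply Proposition~\ref{1} directly to the segment $[0,D_v]\to\bbR^n$, $t\mapsto tv/D_v$, whose boundary $\{0,v\}$ already lies in $A^0$ and whose image is a single line. The conclusion of Proposition~\ref{1} then gives a uniformly Lipschitz path $H_v$ in $A^1$ staying within one cell-diameter of the straight segment, which immediately yields (1)--(6).

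For the inductive step, suppose $H_\tau$ has been built for all proper faces $\tau<\sigma$, where $\dim\sigma = k$. I first assemble a boundary map $\tilde H_\sigma$ on $\partial(\sigma\times[0,D_\sigma])$ in the only way consistent with (1), (2), (4): the constant $v_0$ on $\sigma\times\{0\}$, the inclusion on $\sigma\times\{D_\sigma\}$, and the rescaling $(x,t)\mapsto H_\tau(x, tD_\tau/D_\sigma)$ on each face cylinder $\tau\times[0,D_\sigma]$. The inductive versions of (1), (2), (4) force these pieces to agree on intersections; (5) and (6) force $\tilde H_\sigma$ to be uniformly Lipschitz and uniformly close to $F_\sigma|_\partial$, with image in $A^k$.

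Next, I extend $\tilde H_\sigma$ to a piecewise-linear map $f:\sigma\times[0,D_\sigma]\to\bbR^n$ of bounded combinatorial type. The subdivision is built by barycentrically triangulating $\sigma$, cutting $[0,D_\sigma]$ into subintervals of length at most one, and using the standard prism triangulation in each slab. Set $f = \tilde H_\sigma$ on boundary vertices and $f = F_\sigma$ on interior vertices, and extend linearly on each simplex. Since every vertex value is within a uniform distance of its $F_\sigma$-value and simplices have diameter $O(1)$, the resulting $f$ is uniformly $L$-Lipschitz, uniformly close to $F_\sigma$, and satisfies $f(\partial D)\subset A^k$. Applying Proposition~\ref{1} to $f$ produces $H_\sigma := g$ with image in $A^{k+1}$, agreement with $\tilde H_\sigma$ on the boundary (giving (1), (2), (4)), a uniform Lipschitz bound (giving (5)), and each $g(p)$ in the closure of the cell of $A$ containing $f(p)$ (giving (6), since $f$ was already close to $F_\sigma$). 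For the subcomplex half of (3): $F_\sigma$ lies in the cone from $v_0$ over $\sigma$, which sits inside any convex subcomplex $B\supset\{v_0\}\cup\sigma$; induction and convexity of $B$ place all vertex values of $f$, hence $f(\sigma\times[0,D_\sigma])$, in $B$; and since $B$ is a subcomplex, the cells $g$ visits remain in $B$.

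The main obstacle is verifying the third hypothesis of Proposition~\ref{1}---that $f(D)\cap\sigma'$ is contained in uniformly many $(k{+}1)$-planes for every cell $\sigma'$ of $A$, \emph{independently of $\sigma$}. This is what forces the PL extension rather than an abstract Lipschitz extension via McShane/Kirszbraun, and it is delicate because $D_\sigma$ is unbounded as $\sigma$ recedes from $v_0$, so the total number of simplices in the subdivision grows linearly with $D_\sigma$. The saving point is that $f$ is $L$-Lipschitz with simplex-diameter $O(1)$, so each simplex has image of diameter $O(1)$; combined with the bounded diameters of cells of $A$, this forces only $O(1)$ of the many subdivision simplices to meet any fixed $\sigma'$, and each contributes a single $(k{+}1)$-plane. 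This uniform bound, independent of $D_\sigma$, is precisely what makes the Lipschitz constant of $H_\sigma$ uniform across all cells.
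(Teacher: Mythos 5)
Your overall plan follows the paper's: build the boundary map $\tilde H_\sigma$ from $v_0$, the inclusion, and reparametrized $H_\tau$'s, extend it to the prism $\sigma\times[0,D_\sigma]$ in a controlled way, and then invoke Proposition~\ref{1}. The base case, the plane-count argument (finitely many slabs near any fixed cell, each contributing boundedly many affine pieces), and the invocation of Proposition~\ref{1} at the end are all in the spirit of the paper's proof.

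The gap is in the extension step. You build the subdivision of $\sigma\times[0,D_\sigma]$, set $f=\tilde H_\sigma$ \emph{on boundary vertices}, $f=F_\sigma$ on interior vertices, and extend linearly on each simplex. But then $f|_{\partial(\sigma\times[0,D_\sigma])}$ is a \emph{PL approximation} of $\tilde H_\sigma$ in your triangulation, not $\tilde H_\sigma$ itself: the maps $H_\tau$ produced by Proposition~\ref{1} involve iterated radial projections and are not affine on the faces of your subdivision. Two things break as a consequence. First, the hypothesis $f(\partial D)\subset A^{k}$ of Proposition~\ref{1} fails: $\tilde H_\sigma$ does land in $A^k$ by induction, but the chord between two points of $A^k$ need not stay in $A^k$, so the linear interpolation of boundary values can leave the skeleton. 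Second, even if you could still run Proposition~\ref{1}, its conclusion (i) would give $H_\sigma=f$ on the boundary, and since $f\ne\tilde H_\sigma$ there, conclusion (4) of the proposition---that $H_\sigma$ restricted to $\tau\times[0,D_\sigma]$ \emph{is} the reparametrized $H_\tau$---would fail. Property (4) is exactly the face compatibility needed to make the contraction a chain homotopy, so it cannot be dropped.

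The paper avoids this by extending the genuine boundary map $\tilde H_\sigma$ into the interior via iterated coning (Lemma~\ref{cone}), first onto each slice $\sigma\times\{t_i\}$ and then onto each slab $\sigma\times[t_{i-1},t_i]$, with the bounded aspect ratio of the slabs ensuring the cone maps are uniformly Lipschitz. A cone from an interior point restricts to the identity on the boundary faces, so the resulting $f$ truly equals $\tilde H_\sigma$ on $\partial(\sigma\times[0,D_\sigma])$, keeping both the hypothesis $f(\partial D)\subset A^k$ and property (4) intact. You could repair your argument by replacing the interior linear interpolation with this coning scheme, or more generally by any extension that genuinely restricts to $\tilde H_\sigma$ on boundary faces; a straight PL interpolation of vertex values does not.
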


In the proof we will need the following lemma.

\begin{lemma}\label{cone}
For every $L_0,\epsilon,D,\alpha_0>0$ there exists $L'>0$ such that
the following holds. Let $\sigma\subset \R^k$ be a convex polyhedral
cell, $v\in \R^k$, $f:\sigma\to\R^n$, $w\in\R^n$ such that:
\begin{enumerate}[(a)]
\item $f$ is $L$-Lipschitz,
\item $|v-x|>\epsilon$ for every $x\in\sigma$,
\item $|w-y|\leq D$ for every $y\in f(\sigma)$,
\item $\angle_y(x,v)>\alpha_0$ for every $x,y\in\sigma$, $x\neq y$.
\end{enumerate}
Then the map $F:v*\sigma\to\R^n$ defined on the cone
by $$F((1-t)v+tx)=(1-t)w+tf(x)$$ is $L'$-Lipschitz.
\end{lemma}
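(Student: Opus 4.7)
The plan is to establish a single estimate $|F(p_1) - F(p_2)| \le L'|p_1 - p_2|$ valid for all pairs of representatives $p_i = (1-t_i)v + t_i x_i$ with $t_i \in [0,1]$ and $x_i \in \sigma$. Since this in particular forces $F(p_1) = F(p_2)$ whenever $p_1 = p_2$, it simultaneously verifies that $F$ is well defined on $v*\sigma \subset \R^k$ and that it is $L'$-Lipschitz there.

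The starting point is the parallel pair of identities
\[
p_1 - p_2 = (t_1 - t_2)(x_1 - v) + t_2(x_1 - x_2),
\]
\[
F(p_1) - F(p_2) = (t_1 - t_2)(f(x_1) - w) + t_2(f(x_1) - f(x_2)),
\]
obtained by adding and subtracting $t_2 x_1$ and $t_2 f(x_1)$ respectively. Setting $u_1 = x_1 - v$ and $u_2 = x_1 - x_2$, hypotheses (a)--(c) together with $|u_1| > \epsilon$ bound $|F(p_1) - F(p_2)|$ above by a linear combination of $|t_1 - t_2||u_1|$ and $t_2|u_2|$ with coefficients depending only on $L_0, \epsilon, D$. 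The key geometric input is that the angle $\theta$ between $u_1$ and $u_2$ is precisely the interior angle of the triangle $v x_1 x_2$ at $x_1$: applying (d) at $x_1$ gives $\theta > \alpha_0$, while applying (d) at $x_2$ together with the triangle angle sum $\pi$ gives $\theta < \pi - \alpha_0$. In particular $|\cos\theta| \le \cos\alpha_0 < 1$, so expanding $|p_1 - p_2|^2$ as a quadratic form in $(t_1-t_2)|u_1|$ and $t_2|u_2|$ and using $2|A||B| \le A^2 + B^2$ yields
\[
|p_1 - p_2|^2 \;\ge\; (1 - \cos\alpha_0)\bigl[(t_1 - t_2)^2 |u_1|^2 + t_2^2 |u_2|^2\bigr],
\]
and a final Cauchy--Schwarz step produces the desired $L'$ depending only on $L_0, \epsilon, D, \alpha_0$.

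The main obstacle, and the reason hypothesis (d) must be invoked at \emph{both} endpoints rather than just one, is the possibility of near-collinearity of $v, x_1, x_2$: if, say, $x_1$ lay between $v$ and $x_2$, then $u_1$ and $u_2$ would be nearly anti-parallel, $|p_1 - p_2|$ would collapse while $|F(p_1) - F(p_2)|$ need not, and no uniform Lipschitz bound could hold. Two-sided use of (d) is exactly what forces $\theta$ away from both $0$ and $\pi$ and supplies the quantitative nondegeneracy needed for the estimate.
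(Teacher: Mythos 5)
Your proof is correct and follows essentially the same approach as the paper's: both decompose the displacement into a radial component and a slice component, bound $F$ on each piece using (a)--(c), and use (d) to control the angle between them so that $|p_1-p_2|$ dominates the sum. You carry out the estimate as an explicit quadratic-form computation rather than the paper's verbal ``slice segment followed by radial segment'' argument, and you usefully make explicit that (d) must be invoked at both endpoints to keep the angle away from both $0$ and $\pi$, a point the paper's sketch leaves implicit.
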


\begin{proof}
We may assume $v=w=0$. The extension $F$ is clearly $L$-Lipschitz when
restricted to each slice $t\sigma$ by (a), and by (b) and (c) $F$ is also
uniformly Lipschitz when restricted to each radial line $\{tx\mid t\in
[0,1]\}$ for any $x\in\sigma$. Any two points in the cone $v*\sigma$
can be connected by a segment in a slice followed by a segment in a
radial line. By (d) the angle between these segments is bounded below,
so the total length is bounded by a fixed multiple of the distance
between the two points, implying the result.
\end{proof}

\begin{proof}[Proof of Proposition \ref{P:lip}]
The construction is by induction on $\dim(\sigma)$. When $\sigma$ is a
vertex apply Proposition \ref{1} to the geodesic $f$ joining $v_0$ to
$\sigma$. This gives $g=H_\sigma:[0,D_\sigma]\to \R^n$.

Assume now that $\dim(\sigma)=k$ and $H_\tau$ has been constructed for
all faces $\tau<\sigma$.  Define $f:\partial(\sigma\times
[0,D_\sigma])\to \R^n$ to be constant $v_0=0$ on $\sigma\times \{0\}$,
inclusion on $\sigma\times\{D_\sigma\}$, and linearly reparametrized
$H_\tau$ on $\tau\times [0,D_\sigma]$, for every face
$\tau<\sigma$. We now wish to extend $f$ to all of $\sigma\times
[0,D_\sigma]$ and apply Proposition \ref{1} to this extension. If we
extend by coning off from a point in the interior of $\sigma\times
[0,D_\sigma]$, the Lipschitz constant might blow up since the angles
as in Lemma \ref{cone}(d) will be small when $D_\sigma$ is
large. Instead, we first subdivide $[0,D_\sigma]$ into
$0=t_0<t_1<\cdots<t_\sigma=D_\sigma$ so that the length $t_i-t_{i-1}$
of each segment belongs to a fixed interval $[B_1,B_2]$ with
$B_1>0$. Then we use Lemma \ref{cone} to extend $f$ to $\sigma\times
\{t_i\}$ for each $i$. For this we use $v_i=(x_\sigma,t_i)$ for a
point $x_\sigma$ in the interior of $\sigma$, and we use the same
point for this isometry type. The image $f(\partial
\sigma)\times\{t_i\}$ has uniformly bounded diameter, and we can set
$w_i\in\R^n$ to be any point in this image. Because of (5) and (6),
the assumptions of Lemma \ref{cone} are satisfied and we get uniformly
Lipschitz extensions of $f$. It remains to extend $f$ to each
$\sigma\times [t_{i-1},t_i]$ this is done in exactly the same way, by
coning from $(x_\sigma,\frac{t_{i-1},t_i}2)$ with respect to a point
in the image of $\partial(\sigma\times [t_{i-1},t_i])$. Thus the
extension $\sigma\times [0,D_\sigma]\to \R^n$ is uniformly
Lipschitz. The third bullet in Proposition \ref{1} holds since only a
bounded number of images of $\sigma\times [t_{i-1},t_i]$ intersect a
given cell. Thus we can apply Proposition \ref{1} to this extension to
get the desired map $H_\sigma$.
\end{proof}

When $\sigma$ is a $k$-cell and $\tau$ a $(k+1)$-cell (both oriented)
we denote by $c(\sigma,\tau)$ the degree of the map
$$(\sigma\times [0,D_\sigma],\partial (\sigma\times
[0,D_\sigma]))\overset {H_\sigma} \to (A^{k+1},A^{k+1}\smallsetminus
\overset\circ \tau)$$
(the latter set is by excision equivalent to $(\tau,\partial\tau)$). 

\begin{prop}
The numbers $c(\sigma,\tau)\in\Z$ are uniformly bounded.
\end{prop}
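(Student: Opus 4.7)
The plan is to bound $|c(\sigma,\tau)| = |\deg H_\sigma|$ by combining the uniform Lipschitz constant $L$ from Proposition \ref{P:lip}(5) with the closeness to the straight-line homotopy in (6), via the area formula. Since $H_\sigma$ is Lipschitz and hence differentiable a.e.\ by Rademacher, the degree at a regular value $y \in \overset{\circ}\tau$ equals a signed count of preimages, so $|c(\sigma,\tau)| \le \# H_\sigma^{-1}(y)$. Integrating over $\tau$ and applying the area formula,
\[
|c(\sigma,\tau)|\cdot\mathrm{vol}_{k+1}(\tau) \;\le\; \int_\tau \# H_\sigma^{-1}(y)\,dy \;=\; \int_{H_\sigma^{-1}(\overset{\circ}\tau)} J_{H_\sigma} \;\le\; L^{k+1}\,\mathrm{vol}_{k+1}(H_\sigma^{-1}(\tau)).
\]
Since $\mathrm{vol}_{k+1}(\tau)$ is uniformly bounded below (only finitely many isometry types of cells), the claim reduces to a uniform upper bound on $\mathrm{vol}_{k+1}(H_\sigma^{-1}(\tau))$.

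To bound this volume I would use (6) to compare $H_\sigma^{-1}(\tau)$ with the preimage under the straight-line homotopy $S_\sigma(x,t) := tx/D_\sigma$: there is a uniform constant $C$ with $H_\sigma^{-1}(\tau) \subset S_\sigma^{-1}(N_C(\tau))$, where $N_C$ is the $C$-neighborhood. Substituting $s := t/D_\sigma$ and applying Fubini,
\[
\mathrm{vol}(H_\sigma^{-1}(\tau)) \;\le\; D_\sigma \int_\sigma \bigl|\{s \in [0,1] : sx \in N_C(\tau)\}\bigr|\,dx.
\]
For fixed $x \in \sigma$, the curve $s \mapsto sx$ has speed $|x|$, and its intersection with the bounded set $N_C(\tau)$ has length at most $D := \mathrm{diam}(\tau) + 2C$, a uniform constant; hence the inner Lebesgue measure is at most $D/|x|$.

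To finish, split into two regimes using $D_\sigma \le d(v_0,\sigma)+\mathrm{diam}(\sigma)$. If $d(v_0,\sigma) \ge \mathrm{diam}(\sigma)$, then $|x| \ge d(v_0,\sigma)$ on $\sigma$ and $D_\sigma/d(v_0,\sigma) \le 2$, so the estimate above yields the uniform bound $\mathrm{vol}(H_\sigma^{-1}(\tau)) \le 2D\,\mathrm{vol}(\sigma)$. Otherwise $D_\sigma \le 2\,\mathrm{diam}(\sigma)$ is itself uniformly bounded, and the trivial estimate $\mathrm{vol}(H_\sigma^{-1}(\tau)) \le \mathrm{vol}(\sigma \times [0,D_\sigma])$ is uniform. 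The main obstacle is spotting the area-formula reduction: once the degree is converted into a volume of $H_\sigma^{-1}(\tau)$, property (6) effectively identifies this preimage with a thickened $1$--$1$ piece of the straight-line cone over $\tau$, and the only apparent obstruction --- the factor $1/|x|$ blowing up near the vertex $v_0$ --- is neutralized by the case split above.
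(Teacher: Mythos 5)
Your proof is correct but takes a genuinely different route from the paper's. The paper argues by contradiction via Arzel\`a--Ascoli: if $|c(\sigma_i,\tau_i)|\to\infty$, properties (5)--(6) confine $H_{\sigma_i}^{-1}(\tau_i)$ to cylinders $\sigma_i\times [u_i,u_i+C]$ of uniformly bounded height; since there are only finitely many isometry types of $(\sigma_i,\tau_i)$ and of the surrounding subcomplexes $Y_i\subset A^{k+1}$, and since the restricted maps $\sigma\times[0,C]\to Y$ are uniformly Lipschitz, Arzel\`a--Ascoli extracts a uniformly convergent subsequence, forcing eventually constant degree --- a contradiction. Your argument is instead direct and quantitative: you convert the degree into a Jacobian integral via the change-of-variables formula, bound the Jacobian by $L^{k+1}$ using (5), and then bound $\mathrm{vol}(H_\sigma^{-1}(\tau))$ against the straight-line homotopy using (6). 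The potential blow-up of $D_\sigma/|x|$ near $v_0$ is correctly neutralized by your case split (far cells: $D_\sigma\leq 2\,d(v_0,\sigma)\leq 2|x|$; near cells: $D_\sigma\leq 2\,\mathrm{diam}(\sigma)$), which makes explicit a geometric dichotomy that the paper leaves implicit in asserting the preimage lies in $\sigma_i\times[u_i,u_i+C]$. Your approach has the advantage of producing an explicit numerical bound in terms of $L$, the constant from (6), and the geometry of the finitely many cell types, whereas the paper's compactness argument gives only existence of a bound; the modest cost is invoking Rademacher's theorem, the area formula, and the a.e.\ degree-as-signed-count formula for Lipschitz maps, which are standard but should be cited.
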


\begin{proof}
A warmup is the fact that any map $S^n\to S^n$ of large degree must
have a large Lipschitz constant. This follows from Arzela-Ascoli. The
set of maps $S^n\to S^n$ with Lipschitz constants bounded above is
compact and hence represents finitely many homotopy classes.

The proof in our relative case is similar. Suppose there is a sequence
of pairs $(\sigma_i,\tau_i)$ with $|c(\sigma_i,\tau_i)|\to\infty$. We
may assume all $\sigma_i$ are isometric and have dimension $k$, and
all $\tau_i$ are isometric. By (5) the preimage of $\tau_i$ under
$H_{\sigma_i}$ is contained in $\sigma_i\times [u_i,u_i+C]$ for some
fixed $C$ independent of $i$. The image of $\sigma_i\times
[u_i,u_i+C]$ is contained in a subcomplex $Y_i\subset A^{k+1}$ of
$\tau_i$ and up to isomorphism there are only finitely many
possibilities.

After translating the interval and identifying all $\sigma_i$ and all
$\tau_i$ and the $Y_i$ as above we have a sequence of maps
$\sigma\times [0,C]\to Y$, and these all have uniformly bounded
Lipschitz constants. By Arzela-Ascoli after a subsequence they will be
close to each other and will determine the same degree. Contradiction.
\end{proof}

Now $\sigma\to c(\sigma)=-\sum c(\sigma,\tau)\tau$ defines a
contraction.

Assume that $X$ is a building corresponding to a reductive group $G$ over a $p$-adic field $k$, with a refined chamber structure i.e. we have divided chambers into smaller ones in 
a $G$-equivariant fashion. Let $x$ be a vertex in $G$ and $G_x$ the (largest) parahoric group fixing $x$.  
 For every facet $\sigma \subset X$  the cone with the vertex $x$ and the base $\sigma$ is  contained in any apartment $A$ containing $x$ and $\sigma$. 
 Thus the contraction $c(\sigma)$ can be defined working in any such apartment. If $g\in G_x$ stabilizes $\sigma$ then $g$ fixes $\sigma$  point-wise, hence also the cone and $c(\sigma)$. Hence 
 $c$ can be extended to whole $G_x$-orbit of $\sigma$ in an $G_x$-equivariant fashion, and we have proved: 
 
 \begin{prop} For every vertex $x$ in $X$ there exists a $G_x$-invariant contraction $c$ satisfying the conditions (1)-(3) in Introduction.  
 \end{prop}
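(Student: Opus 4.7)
The plan is to transport the Euclidean contraction built in Section~2 to the whole building $X$, apartment by apartment, and then extend the result by $G_x$-equivariance. Concretely, given a facet $\sigma$ of $X$, I would pick any apartment $A$ containing both $x$ and $\sigma$ (which exists by the building axioms) and apply the propositions of Section~2 to the cellular chain complex $C_*(A)$ with base vertex $v_0=x$. This produces a contraction $c_A$ on $C_*(A)$ satisfying all three properties inside $A$.

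The first point to verify is that the chain $c_A(\sigma)$, viewed inside $C_*(X)$, is independent of the chosen apartment. The cone with apex $x$ and base $\sigma$ is a union of geodesic segments, each of which lies in every apartment containing its two endpoints; hence the cone, together with the subcomplex of $X$ generated by it, lies in $A\cap A'$ for any two apartments $A,A'$ containing $x$ and $\sigma$. Since property~(2) confines the support of $c_A(\sigma)$ to that subcomplex, and the construction uses only the cells meeting that subcomplex, one obtains $c_A(\sigma)=c_{A'}(\sigma)$ as chains in $C_*(X)$. This lets me set $c(\sigma):=c_A(\sigma)$ unambiguously.

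Next I would upgrade $c$ to be $G_x$-equivariant. Choose a representative in each $G_x$-orbit of facets and declare $c(g\sigma):=g\cdot c(\sigma)$ for $g\in G_x$. For this to be well defined, I must check that the $G_x$-stabilizer of $\sigma$ fixes $c(\sigma)$. The paper invokes the standard fact that, in the refined chamber structure, an element of $G_x$ stabilizing a facet fixes it pointwise; such an element then fixes $x$ and the whole cone pointwise and carries $A$ to another valid apartment $gA$ containing $x$ and $\sigma$. Apartment independence then yields $g\cdot c(\sigma)=g\cdot c_A(\sigma)=c_{gA}(g\sigma)=c_{gA}(\sigma)=c(\sigma)$, and equivariance on the whole orbit is automatic by construction.

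Finally I would verify the contraction identity $c_{i-1}\partial+\partial c_i=\mathrm{id}$ on each basis element $\sigma$. All chains involved — $\sigma$, $\partial\sigma$, $c(\sigma)$, and $c(\partial\sigma)$ — are supported inside a single apartment $A$ containing $x$ and $\sigma$, so this reduces to the identity already established for $c_A$ in Section~2. Conditions~(1) and~(2) from the introduction then hold by construction, and~(3) is inherited directly from the uniform bound in the last proposition of Section~2. The main obstacle is really the apartment-independence of $c(\sigma)$: it is what lets the locally-defined Euclidean contractions glue to a single, canonical, $G_x$-invariant operator on $C_*(X)$, and it hinges on combining the building axiom on agreement of apartments with the containment property~(2) of the Euclidean contraction. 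Once that is in place, the rest of the argument is purely formal.
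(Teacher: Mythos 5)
Your proposal follows the same route as the paper: build the contraction in an apartment $A$ containing $x$ and $\sigma$, use that the cone $x*\sigma$ lies in any such apartment (so the cells seen by the construction lie in $A\cap A'$, giving apartment-independence), note that an element of $G_x$ stabilizing $\sigma$ fixes $\sigma$, the cone, and hence $c(\sigma)$, and then extend by $G_x$-equivariance over each orbit. You spell out the well-definedness and the reduction of the contraction identity to a single apartment in more detail than the paper does, but the argument is the same.
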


\section{Moy-Prasad groups} 
Let $k$ be a non-archimedean local field and $G$ the group of $k$ points of a reductive algebraic group defined over $k$. Let $S$ be a maximal $k$-split torus in $G$ and 
$\Phi=\Phi(G,S)$ the corresponding restricted root system. Let $T$ be the centralizer of $S$ in $G$. 
A decomposition $\Phi=\Phi^+ \cup \Phi^-$ of the root system into positive and negative roots defines a pair of maximal unipotent subgroups $U$ and $\bar U$ of $G$.  
Let $A(S)$ be the apartment in the building $X$ of $G$ stabilized by $S$. 
Let $x\in A(S)$ and $r>0$. The Moy-Prasad group $G_{x,r}$ has an Iwahori decomposition (\cite{MP2} Theorem 4.2) 
\[ 
G_{x,r}= \bar U_{x,r} T_r U_{x,r} 
\] 
where the three factors are the intersection of $G_{x,r}$ with $\bar U$, $T$ and $U$, respectively. The factor $T_r$ is independent of $x$ as indicated.  

 \begin{lemma} \label{L:MP1} Let $x, y\in X$ and $z$ be a point on the geodesic connecting $x$ and $y$. If $r>0$ then 
 \[ 
  G_{z,r}\subseteq  G_{x,r} G_{y,r}. 
 \]   
 \end{lemma}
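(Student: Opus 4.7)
The plan is to work inside a single apartment containing all three points and to exploit the Iwahori decomposition of $G_{z,r}$. By the building axioms, $x$ and $y$ lie in a common apartment $A=A(S)$, and since apartments are Euclidean and hence convex, the segment $[x,y]$—and with it the point $z$—lies in $A$ as well. I would then choose a Weyl chamber in the vector space underlying $A$ whose closure contains the vector $y-x$, and let $\Phi=\Phi^+\sqcup\Phi^-$ be the corresponding decomposition into positive and negative roots, with $U$ and $\bar U$ the associated maximal unipotent subgroups. By construction $\alpha(y)\ge\alpha(x)$ for every $\alpha\in\Phi^+$; since each $\alpha$ is affine-linear on $A$ and $z$ lies on $[x,y]$, this forces
\[
\alpha(x)\le\alpha(z)\le\alpha(y) \text{ for } \alpha\in\Phi^+, \qquad \alpha(y)\le\alpha(z)\le\alpha(x) \text{ for } \alpha\in\Phi^-.
\]

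The key monotonicity I then invoke from Moy--Prasad is that for each root $\alpha$, the group $U_\alpha\cap G_{p,r}$ depends on $p\in A$ only through the value $\alpha(p)$ and grows weakly as $\alpha(p)$ increases; this is immediate from the fact that $G_{p,r}$ is generated by affine root subgroups $U_{\alpha+c}$ subject to $\alpha(p)+c\ge r$, a condition that loosens as $\alpha(p)$ grows. Combined with the inequalities above, this yields $U_\alpha\cap G_{z,r}\subseteq U_\alpha\cap G_{y,r}$ for $\alpha\in\Phi^+$ and $U_\alpha\cap G_{z,r}\subseteq U_\alpha\cap G_{x,r}$ for $\alpha\in\Phi^-$, so taking products gives $U_{z,r}\subseteq U_{y,r}$ and $\bar U_{z,r}\subseteq\bar U_{x,r}$. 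Now I apply the Iwahori decomposition already quoted from \cite{MP2} Theorem 4.2: an arbitrary element $g\in G_{z,r}$ factors as $g=u^-\, t\, u^+$ with $u^-\in\bar U_{z,r}\subseteq G_{x,r}$, $t\in T_r\subseteq G_{x,r}$ (the torus factor being independent of the apartment point), and $u^+\in U_{z,r}\subseteq G_{y,r}$. Regrouping, $g=(u^-t)\cdot u^+\in G_{x,r}\cdot G_{y,r}$, which is the inclusion we want.

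The only point that requires a little care is the choice of positive system when some root $\alpha$ vanishes on $y-x$: such an $\alpha$ may be placed in either $\Phi^+$ or $\Phi^-$, and each choice furnishes a valid Iwahori decomposition to which the argument applies, so this is cosmetic rather than a genuine obstacle. The hypothesis $r>0$ enters solely through \cite{MP2} Theorem 4.2, which furnishes the Iwahori decomposition of $G_{z,r}$ at an \emph{arbitrary} apartment point $z$; at depth $0$ such a decomposition would fail away from suitably chosen vertices, which is why the lemma is stated for $r>0$.
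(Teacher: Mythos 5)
Your argument is essentially the paper's own: pass to a common apartment, choose a positive system adapted to the direction of the geodesic, use the affine-root description of the Moy--Prasad filtration to get $\bar U_{z,r}\subseteq\bar U_{x,r}$ and $U_{z,r}\subseteq U_{y,r}$, and then combine via the Iwahori decomposition $G_{z,r}=\bar U_{z,r}T_rU_{z,r}$. One small point in your favor: the paper sets $v=x-y$ and takes $\alpha(v)\geq 0$ on $\Phi^+$, which as written yields the inclusions $U_{z,r}\subseteq U_{x,r}$ and $\bar U_{z,r}\subseteq\bar U_{y,r}$, not the ones the paper states; your choice of $\Phi^+$ adapted to $y-x$ is the internally consistent one, so you have silently corrected a sign slip. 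The only thing you omit relative to the paper is the final remark that the affine-root-subgroup description of $G_{p,r}$ you invoke is a priori a quasi-split statement (it is the one in \cite{MP1}), and the inclusions for a general reductive group are obtained by passing to a finite unramified extension where $G$ becomes quasi-split and then taking Galois-fixed points; your monotonicity argument should be understood as running over that extension. This is a minor gap of attribution rather than of substance.
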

 \begin{proof} Without loss of generality we can assume that $x,y\in A(S)$.  The apartment $A(S)$ is an affine space so $v=x-y$ is a well defined vector in the space of translations 
 of $A(S)$. Elements of $\Phi$ (roots) are functionals on the space of translations of $A(S)$. Thus $\alpha(v)$ is a well defined real number for every $\alpha\in \Phi$. Pick a 
 decomposition $\Phi=\Phi^+ \cup \Phi^-$  such that $\alpha(v) \geq 0$ for all $\alpha\in \Phi^+$. Then 
 \[ 
\bar U_{x,r} \supseteq \bar U_{z,r} \text{ and } U_{y,r} \supseteq U_{z,r}. 
\] 
This inclusions are a direct consequence of the definition of $G_{x,r}$ \cite{MP1} if $G$ is quasi-split. The general case is then deduced by checking it over a finite unramified extension of $k$, over which $G$ is quasi-split, and then taking fixed points for the Galois action. Lemma follows from the Iwahori decomposition of $G_{z,r}$. 
 \end{proof} 
 
The Iwahori decomposition and Lemma  \ref{L:MP1} hold for the Groups $G_{x,r,+}$ (for $r\geq 0$) since $G_{x,r,+}=G_{x,s}$ for all $s>r$ and sufficiently close to $r$. 
Now fix a non-negative rational number $r$. Refine the chamber decomposition of $X$ so that the function $x\mapsto G_{x,r,+}$ is constant on interiors of facets. Thus, for any 
facet $\sigma$ we define $K_{\sigma}= G_{x,r,+}$ where $x$ is any interior point of $\sigma$. If $\tau$ is a facet in the boundary of $\sigma$ then $K_{\tau} \subseteq K_{\sigma}$. 
The refinement of $X$ and a check of these inclusions is done explicitly for quasi-split groups. We shall give details in the split case below. 
The general case follows by unramified Galois descent. 

 \begin{lemma} \label{L:MP2}  Let $x$ be a vertex of $X$ and $\sigma$ a facet for a refined decomposition of $X$. Assume that a facet 
 $\tau$ is contained in the smallest subcomplex of $X$ containing the cone with the vertex $x$ and the base $\sigma$. Then 
 \[ 
 K_{\tau} \subseteq K_x  K_{\sigma} 
 \] 
  \end{lemma}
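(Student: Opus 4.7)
The plan is to reduce the statement to Lemma~\ref{L:MP1}, applied to the groups $G_{\cdot,r,+}$ (which is legitimate for $r\ge 0$, as recorded in the paragraph following the proof of Lemma~\ref{L:MP1}). Beyond that the only facts I would use are the defining property of the refinement, namely that $G_{\cdot,r,+}\equiv K_\sigma$ on the relative interior of a facet $\sigma$, and the inclusion $K_{\tau_1}\subseteq K_{\tau_2}$ whenever $\tau_1$ is a face of $\tau_2$; both are recorded in the paragraph preceding the lemma.

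As explained in the introduction, the hypothesis on $\tau$ unpacks as follows: there exist a facet $\tau'$ of the refined decomposition with $\tau\le\tau'$, a point $y\in\sigma$, and a point $z$ on the geodesic segment $[x,y]$ lying in the relative interior of $\tau'$. I would fix an apartment $A$ containing both $x$ and $\sigma$; the entire cone, and in particular $[x,y]$ and $\tau'$, then lie in $A$, and $[x,y]$ is just the straight segment in the affine space $A$. Let $\sigma'\le\sigma$ be the unique face of $\sigma$ whose relative interior contains $y$ (with $\sigma'=\sigma$ if $y$ is already interior to $\sigma$).

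With these identifications $G_{x,r,+}=K_x$, $G_{y,r,+}=K_{\sigma'}$, and $G_{z,r,+}=K_{\tau'}$. Applying the $G_{\cdot,r,+}$ form of Lemma~\ref{L:MP1} to the collinear triple $x,z,y$ then yields
\[
K_\tau \;\subseteq\; K_{\tau'} \;=\; G_{z,r,+} \;\subseteq\; G_{x,r,+}\,G_{y,r,+} \;=\; K_x\,K_{\sigma'} \;\subseteq\; K_x\,K_\sigma,
\]
which is the desired inclusion. Once the $G_{\cdot,r,+}$ version of Lemma~\ref{L:MP1} is in hand there is no real obstacle here; the argument is essentially a single application of that lemma to a point $z$ at which the geodesic issuing from $x$ meets the interior of $\tau'$ on its way to $\sigma$.
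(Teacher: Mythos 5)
Your proof is correct and is essentially the paper's argument: both identify the point $z$ on the geodesic $[x,y]$ interior to a facet $\tau'\geq\tau$, observe $K_\tau\subseteq G_{z,r,+}$ and $G_{y,r,+}\subseteq K_\sigma$, and conclude by applying Lemma~\ref{L:MP1} (for the groups $G_{\cdot,r,+}$) to the collinear triple $x,z,y$. Your version is merely slightly more explicit about passing through the face $\sigma'$ of $\sigma$ containing $y$ in its interior.
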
  
 \begin{proof} By the assumption, there exists a point $y\in \sigma$ and a point $z$ on the geodesic $[x,y]$ such that the facet containing $z$,  as an interior point, contains $\tau$. 
 Then $K_{\tau} \subseteq G_{z,r,+}$ and $G_{y,r,+}\subseteq K_{\sigma}$.  Lemma follows from Lemma  \ref{L:MP1}. 
  \end{proof}

\vskip 10pt 
We shall work out the details in the case $G$ is simple and split. (A similar treatment for quasi split groups can be easily given based on computations in \cite{PR}.) 
Since $G$ is split, $S$ is a maximal torus, hence $T=S$. 
Let  $O$ be the ring of integers in $k$ and $\pi$ and a uniformizing element. The group $k^{\times}$ has a natural filtration 
$ k^{\times} \supset O^{\times} \supset 1+ \pi O \supset 1+\pi^2O \supset \ldots $ which gives rise to a filtration 
  \[ 
  T\supset T_0 \supset T_1 \supset \ldots 
  \] 
   where $T_0$ is the maximal compact subgroup, which is the same as the set of all $t\in T$ such that $\chi(t)\in O^{\times}$ for all algebraic characters 
 $\chi$ of $T$,  and $T_r$ is the set of all $t\in T$ such that $\chi(t)\in 1+\pi^rO$ for all algebraic characters $\chi$. 
 The apartment $A=A(S)$ can be identified with $\Hom(\mathbb G_m, S)\otimes \mathbb R$, so $0$ is a special vertex of $A$. The apartment is a Coxeter 
 complex defined by an affine root system $\Psi= \cup_{n\in \mathbb Z} \Phi + n $. 

For every root $\alpha\in \Phi$ we have a subgroup $U_{\alpha}$ of $G$ isomorphic to $k$, and 
 for every affine root $a$ we have a subgroup isomorphic to $O$. More precisely, 
 If $a$ is an affine root whose gradient is $\alpha$, then $U_a$ is defined as the subgroup of $U_{\alpha}$ consisting of elements fixing the half-plane $a\geq 0$.
 The filtration 
\[ 
\ldots \supset U_{a-1}\supset U_a \supset U_{a+1} \supset \ldots 
\] 
of $U_{\alpha}$ corresponds to the filtration $\ldots \supset \pi^{-1} O \supset O \supset \pi O \supset \ldots $ of $k$. 
For example, if $G=\SL_2(k)$ and $T$ the torus of diagonal matrices, then one can identify $A$ with $\mathbb R$ and the set of affine 
roots with the set of affine functions $x\mapsto \pm x+n$, where $n\in\mathbb Z$, so that for $a(x)=x+n$ 
\[ 
U_a=\left\{\left[\begin{array}{cc} 
1 & x \\
0 & 1 \\
\end{array} \right] | ~ x\in \pi^n O\right\} \text{ and } 
U_{-a}=\left\{\left[\begin{array}{cc} 
1 & 0 \\
x & 1 \\
\end{array} \right] | ~ x\in \pi^{-n} O\right\}
\] 

It is convenient to introduce a notion of imaginary roots. The imaginary roots are constant, integer-valued functions on $A$. Let 
\[ 
\tilde\Psi=\Psi \cup \mathbb Z. 
\] 
If $a\in \tilde\Psi$ is an imaginary non-negative root, we define $U_a=T_a\subseteq T$. Let $x\in A$ and $r\geq 0$, a real number. 
Then $G_{x,r}$ and $G_{x,r,+}$ are generated by $U_a$ for all $a\in \tilde \Psi$  such that $a(x) \geq r$ and $a(x) > r$, respectively.   If we 
fix a decomposition $\Phi=\Phi^+ \cup \Phi^-$ then the groups $U_{x,r}$ and $\bar U_{x,r}$, appearing in the Iwahori decomposition, are generated by $U_a$ such that $a(x)\geq r$ and the gradient of 
$a$ is in $\Phi^+$ and $\Phi^-$, respectively. Now the inclusions in the proof of Lemma \ref{L:MP1} are clear.  

If $r$ is an integer and $a(x)>r$ for one interior point of a facet $\sigma$ then the same is true for any point in the interior of $\sigma$. Thus the 
groups $G_{x,r,+}$ are constant on interiors of facets. Moreover, if $\tau$ is contained in the boundary of $\sigma$, and $a> r$ on the interior of 
$\tau$ then it is so on the interior of $\sigma$. Hence $K_{\tau} \subseteq K_{\sigma}$. Finally, assume $r=\frac{n}{m}$ is a non-negative rational number. We 
can refine the Coxeter complex by replacing $\Psi$ with $\Psi_m= \cup_{n\in \mathbb Z} \Phi + \frac{n}{m}$. Now the groups $G_{x,r,+}$ are constant on interiors of facets.

 \section{Schneider-Stuhler complex}

 We fix  a non-negative rational number $r$ throughout this section. We refine the chamber decomposition of $X$ so that the groups $G_{x,r,+}$ are constant on the interior of 
 facetes and we set $K_{\sigma}:=G_{x,r,+}$ where $x$ is any interior point of a facet $\sigma$. 
 Let $V$ be a smooth representation of $G$, this means that any vector
 $v\in V$ is fixed by an open compact subgroup of $G$, depending on
 $v$. For every facet $\sigma$  let $V_{\sigma}$ be the subspace of all vectors in $V$  fixed by $K_{\sigma}$.  
The complex $C_*(X) \otimes_{\mathbb Z} V$ admits a natural representation of $G$ defined by $g(\sigma\otimes v)= g(\sigma)\otimes g(v)$, for all $g\in G$. 
  Let $C_*(X,V)$ be the subcomplex  spanned by  $\tau \otimes v$ where $v\in V_{\tau}$. 
 The boundary $\partial$ preserves $C_*(X,V)$ because $V_{\sigma}
 \subseteq V_{\rho}$ any time $\rho$ is in the boundary of $\sigma$. The action of $G$ on $C_*(X)\otimes V$ preserves the sub complex $C_*(X, V)$.  
 
 \begin{thm}  Let $x$ be a vertex in $X$ and $c$ an $x$-based contraction of $C_*(X)$ satisfying the conditions (1) and (2) in the Introduction. 
 Then the complex $C_*(X,V)^{K_{x}}$ is exact. 
 \end{thm}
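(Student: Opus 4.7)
The plan is to construct a contraction of $C_*(X,V)^{K_x}$ by the naive lift of $c$: for $\omega = \sum_\sigma \sigma\otimes v_\sigma \in C_i(X,V)^{K_x}$ set
\[
\hat c(\omega) := \sum_\sigma c(\sigma)\otimes v_\sigma = \sum_\tau \tau\otimes w_\tau, \qquad w_\tau := \sum_\sigma c(\sigma,\tau)\, v_\sigma.
\]
Three things have to be checked: (a) $w_\tau \in V_\tau$ for every $(i{+}1)$-facet $\tau$, so that $\hat c(\omega) \in C_{i+1}(X,V)$; (b) $\hat c(\omega)$ is $K_x$-invariant; and (c) $\partial\hat c + \hat c\,\partial = \mathrm{id}$.

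Items (b) and (c) are formal: (b) follows from the $G_x$-equivariance of $c$ (property~(1)) together with the $K_x$-invariance of $\omega$, and (c) is the identity $\partial c + c\,\partial = 1$ on $C_*(X)$ extended $\bbZ$-linearly in $V$. All of the content is in (a).

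To verify (a), fix $k \in K_\tau$ and a facet $\sigma$ in the support of $\omega$ with $c(\sigma,\tau)\ne 0$. Property~(2) and Lemma~\ref{L:MP2} produce a point $z$ on a geodesic $[x,y]$ with $y\in\sigma$ such that $K_\tau \subseteq G_{z,r,+}$; choosing the positive roots so that $\alpha(x-y)\ge 0$ as in the proof of Lemma~\ref{L:MP1}, the Iwahori decomposition of $G_{z,r,+}$ writes $k = \bar u\, t\, u$ with $\bar u \in \bar U_{z,r,+}\subseteq K_x$, $t\in T_{r,+}\subseteq K_x\cap K_\sigma$, and $u \in U_{z,r,+}\subseteq K_\sigma$. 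Set $k_x := \bar u\, t \in K_x$ and $k_\sigma := u\in K_\sigma$, so $k = k_x k_\sigma$. The crucial observation is that $\bar U_{z,r,+}$ and $T_{r,+}$ both lie in the parahoric $G_{z,0}$, which fixes pointwise the facet having $z$ in its interior and therefore fixes $\tau$; thus $k_x \in K_x\cap \mathrm{Stab}(\tau)$. Consequently $k v_\sigma = k_x v_\sigma = v_{k_x\sigma} = v_{k\sigma}$ by the $K_x$-invariance of $\omega$, while $c(\sigma,\tau) = c(k_x\sigma, k_x\tau) = c(k\sigma,\tau)$ by the $G_x$-equivariance of $c$ and $k_x\tau = \tau$.

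Since $k\sigma = k_x\sigma \in K_x\sigma$, the map $\sigma\mapsto k\sigma$ is a bijection of each $K_x$-orbit in the support of $\omega$; reindexing gives
\[
k w_\tau = \sum_\sigma c(\sigma,\tau)\, v_{k\sigma} = \sum_\sigma c(k\sigma,\tau)\, v_{k\sigma} = \sum_{\sigma'} c(\sigma',\tau)\, v_{\sigma'} = w_\tau,
\]
so $w_\tau \in V_\tau$. Once (a) is settled, acyclicity of $C_*(X,V)^{K_x}$ is a formal consequence of the existence of the chain contraction $\hat c$. The main obstacle is precisely this bridging step — extracting from the Iwahori decomposition that the $K_x$-factor in Lemma~\ref{L:MP2}'s inclusion $K_\tau\subseteq K_xK_\sigma$ may be chosen to lie in $\mathrm{Stab}(\tau)$ — which is what makes the $G_x$-equivariance of $c$ applicable to the naive lift.
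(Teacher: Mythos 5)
Your proof is correct and rests on the same essential inputs as the paper's: Lemma~\ref{L:MP2} (which packages condition~(2) on the contraction into the group-theoretic inclusion $K_\tau \subseteq K_x K_\sigma$) and the Iwahori decomposition of the Moy--Prasad group $G_{z,r,+}$ with respect to a choice of positive roots adapted to the direction $x-y$. The mechanics differ, though. The paper passes to the generating set $e_x(\sigma\otimes v)$ of $C_*(X,V)^{K_x}$ (where $e_x$ is the averaging idempotent over $K_x$), writes $K_\tau$ as a disjoint union of translates $g_i(K_\sigma\cap K_\tau)$ with $g_i\in K_x\cap K_\tau$ (again via the Iwahori decomposition, now of $K_\tau$ itself), and then uses $e_x g_i = e_x$ to swap the average over $K_x$ for the average $e_\tau$ over $K_\tau$, producing an element manifestly of the form $e_x(\tau\otimes e_\tau(v))$. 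You instead work with an arbitrary $K_x$-invariant chain $\omega$ and verify $K_\tau$-invariance of the coefficient $w_\tau$ element-by-element, which requires the additional observation --- absent from the paper's write-up but implicit in its Iwahori-decomposition step --- that the $K_x$-factor of $k\in K_\tau$ can be chosen inside $G_{z,0}$ and therefore fixes $\tau$ pointwise, making the $G_x$-equivariance of $c$ applicable. Your version has the mild advantage of not invoking the averaging idempotents at all, so it applies verbatim to coefficient rings where $\bbQ$ is not available; the paper's version is slightly shorter because the reindexing bijection $\sigma\mapsto k\sigma$ and the bookkeeping on the support of $\omega$ are replaced by the single identity $e_x g_i = e_x$. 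One small presentational caveat in your argument: the reindexing step should explicitly note that $\sigma\mapsto k\sigma$ restricts to a bijection of $\{\sigma : c(\sigma,\tau)\neq 0\}$, which follows by running your decomposition for both $k$ and $k^{-1}$ (both lie in $K_\tau$); the phrase ``bijection of each $K_x$-orbit in the support of $\omega$'' is not quite the needed statement, since the map in question is left translation by the single element $k$ and the relevant invariant set is determined by the contraction, not by the support of $\omega$.
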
 
 \begin{proof} 
 The contraction $c$ defines a contraction on $C_*(X)\otimes V$ by $c(\sigma\otimes v)=c(\sigma)\otimes v$. To prove the theorem, it 
  suffices to show that the contraction preserves the subcomplex $C_*(X,V)^{K_{x}}$. 
  Let  $e_{x}: V \rightarrow V_{x}$ be the projection 
 given by averaging the action of $K_{x}$ on $V$ with respect to a Haar measure on $K_x$ of volume one. 
   The subcomplex  $C_*(X,V)^{K_{x}}$ is spanned by elements $e_x(\sigma\otimes v)$ where $v\in V_{\sigma}$. Since $c$ is $K_{x}$-invariant, 
 \[ 
 c(e_{x}(\sigma\otimes v )) = e_{x} (c(\sigma)\otimes v) =\sum_{\tau} c(\sigma,\tau) e_x(\tau\otimes v). 
 \] 
 By the property (2) and Lemma \ref{L:MP2},  for every $\tau$ appearing in this sum, there exist $g_1, \ldots g_n\in K_{x}\cap K_{\tau}$ such that 
 \[ 
 K_{\tau}= \cup_{i=1}^n g_i(K_{\sigma}\cap K_{\tau}) 
 \] 
 (a disjoint sum). Since $g_i\in K_{x}$, it follows that $e_x \cdot g_i =e_x$, as operators on any representation of $G$.  Hence 
  \[ 
 e_x(\tau\otimes v)= \frac{1}{n}\sum_{i=1}^n e_x g_i ( \tau \otimes  v) . 
 \]  
Since $g_i\in K_{\tau}$, these elements fix $\tau$.  Hence 
 \[ 
 e_x(\tau\otimes v)= e_x( \tau \otimes  \frac{1}{n}\sum_{i=1}^ng_i v). 
 \]  
 Since $v$ is fixed by $K_{\sigma}\cap K_{\tau}$ and $g_i$ are representatives of all $K_{\sigma}\cap K_{\tau}$-cosets in $K_{\tau}$, it follows that 
 \[ 
 \frac{1}{n}\sum_{i=1}^ng_i v=e_{\tau} (v)\in V_{\tau} 
 \] 
 where  $e_{\tau}: V \rightarrow V_{\tau}$ is the projection 
 given by averaging the action of $K_{\tau}$ on $V$ with respect to a Haar measure on $K_{\tau}$ of volume one. Hence $e_x(\tau\otimes v)\in C_*(X,V)^{K_{x}}$ as desired. 
\end{proof} 

\section{Nice open compact subgroups}

 Let $S$ be a maximal split torus in $G$. Let $P$ be a parabolic subgroup of $G$. Without loss of generality we shall assume that $P$ contais $S$.
  In particular, we have a ``standard" choice of the Levi $L$ and the opposite $\bar U$ of the radical $U$ of $P$, both
normalized by $S$.  Let $K$ be a an open compact subgroup of $G$ and $K^G$ the set of 
all $G$-conjugates of $K$. The parabolic group $P$ acts on $K^G$ with finitely many orbits. We say that  $K$ is nice with respect to $P$ if in any $P$-conjugacy class in 
$K^G$ there is $K'$ such that the Iwahori decomposition holds: 
\[ 
K'=(K'\cap U)(K'\cap L)(K'\cap \bar U). 
\] 
If that is the case, then $K'_L=K'\cap L$ is isomorphic to $K'\cap P/K'\cap U$. This observation implies the first of the following properties, for the second see 
 Proposition 3.5.2. in \cite{De}.

\begin{enumerate} 
\item If $W$ is an $L$-module and $(\Ind_P^G W)^K\neq 0$ then $W^{K'_L}\neq 0$ for some $K'$ in $K^G$ with the Iwahori decomposition.  
\item For any $G$-module $V$, and $K'$ in $K^G$ with the Iwahori decomposition, the map $V^{K'} \rightarrow V_{U}^{K'_L}$ is surjective. 
\end{enumerate} 
We say that $K$ is nice if it is nice with respect to any parabolic $P$ containing $S$. 

\begin{prop} Assume that $K$ is nice and $V$ is a $G$-module generated by $V^K$. If $V'$ is a non-trivial subquotient of $V$ then $(V')^K\neq 0$. 
\end{prop}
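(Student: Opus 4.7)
The plan is to argue by induction on the semisimple $k$-rank of $G$, reducing at each step to an irreducible subquotient. Since $K$ is compact, the functor $(-)^K$ is exact on smooth representations, so the hypothesis $(V')^K=V_2^K/V_1^K=0$ propagates to every subquotient of $V'$. Any non-zero smooth $G$-module admits an irreducible subquotient (take a cyclic submodule generated by any non-zero vector, and apply Zorn's lemma to its proper submodules to produce a maximal one), so it suffices to show that every irreducible subquotient $\pi$ of $V$ satisfies $\pi^K\neq 0$.

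For the inductive step, take $\pi$ not supercuspidal: choose a proper standard parabolic $P=LU$ with $\pi_U\neq 0$ and $K'\in K^G$ admitting an Iwahori decomposition relative to $P$. Exactness of the Jacquet functor $r_U$ places $\pi_U$ as a non-zero subquotient of $V_U$. The key intermediate claim is that $V_U$ is generated as an $L$-module by $V_U^{K'_L}$: this follows from right exactness of $r_U$ together with property (2) of niceness, which supplies the surjection $V^{K'}\twoheadrightarrow V_U^{K'_L}$ and, applied to all $G$-conjugates of $K'$ with an Iwahori decomposition, lets one rewrite each $G$-translate of $V^{K'}$ modulo $V(U)$ as an $L$-translate of a $K'_L$-fixed vector. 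Granting that $K'_L=K'\cap L$ is itself nice in $L$, the inductive hypothesis applied in $L$ yields $\pi_U^{K'_L}\neq 0$, and applying property (2) to $\pi$ gives $\pi^{K'}\twoheadrightarrow\pi_U^{K'_L}\neq 0$. Hence $\pi^{K'}\neq 0$, and by $G$-conjugation $\pi^K\neq 0$.

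The base case is $\pi$ supercuspidal. Here I would pass to the Bernstein component $V^{[\pi]}$, which is a direct summand of $V$ and, since the Bernstein decomposition respects $K$-invariants, is still generated by $(V^{[\pi]})^K$. Under the equivalence of the cuspidal block $\mathrm{Rep}_{[\pi]}(G)$ with modules over the Bernstein center via a cuspidal type subgroup for $\pi$, the generation hypothesis translates into a support statement on $V^{[\pi]}$ that, combined with the fine structure theory of cuspidal types (Bushnell--Kutzko--Henniart, as acknowledged in the introduction), produces a $K$-fixed vector in $\pi$.

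The main obstacles I foresee are: (a) making the assertion ``$V_U$ is generated by $V_U^{K'_L}$'' rigorous, which requires more than a formal application of property (2) and needs a decomposition of $G$ compatible with $K'$; (b) verifying that $K'_L=K'\cap L$ is nice in $L$, which should come from chasing the Moy--Prasad description of Section 3 through parabolics of $L$; and (c) the supercuspidal base case, which rests on the fine structure theory of types and is the subtlest ingredient.
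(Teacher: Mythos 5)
You take a genuinely different route from the paper, and it has real gaps. The paper's proof avoids induction entirely: it places $V$ in a single Bernstein component $(L,W)$, so that \emph{every} irreducible subquotient $V'$ of $V$ embeds into some $\Ind_P^G W'$ with $W'$ an unramified twist of $W$. It then uses the hypothesis $V^K\neq 0$ to find one subquotient $V_0\hookrightarrow\Ind_P^G W_0$ with $V_0^K\neq 0$, deduces $(W_0)^{K'_L}\neq 0$ from property (1), and observes that since unramified characters are trivial on compact subgroups, $(W')^{K'_L}\neq 0$ for \emph{every} unramified twist $W'$. Frobenius reciprocity then makes $W'$ a quotient of $V'_U$, so $(V'_U)^{K'_L}\neq 0$, and property (2) lifts this back to $(V')^{K'}\neq 0$. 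The key idea you missed is this insensitivity of compact fixed vectors to unramified twisting; it is what lets one compare all irreducible subquotients of $V$ at once and skip the inductive reduction.

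Your sketch has two concrete problems. First, in the inductive step you need ``$V_U$ is generated as an $L$-module by $V_U^{K'_L}$,'' and you correctly flag that property (2) alone does not give this. The natural way to force generation would be a decomposition of the form $G=PK'$, which would let you push each $G$-translate of $V^{K'}$ through the Iwahori factorization to an $L$-translate of $V_U^{K'_L}$ modulo $V(U)$; but $G=PK'$ fails for positive-depth $K$, so this argument does not close. You also leave unverified that $K'_L$ is nice in $L$, which the abstract statement (for an arbitrary nice $K$) requires. Second, the supercuspidal base case is both unresolved and unnecessary: invoking cuspidal type theory for a general nice $K$ is a much heavier input than the statement warrants. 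In the paper's framework the cuspidal case is the case $L=G$, where irreducible subquotients are precisely unramified twists of $W$, and the twisting observation above finishes the argument with no extra machinery.
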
 
\begin{proof} 
It suffices to prove the statement for irreducible subquotients. Without loss of generality we can assume that $V$ is contained in a single Bernstein component 
corresponding to a pair $(L, W)$ where $L$ is a Levi group containing $S$ and $W$ a quasi-cuspidal representation of $L$. Let $V'$ be an irreducible subquotient of $V$. Then there 
exists an unramified twist $W'$ of $W$ such that $V'$ is a submodule of $\Ind_P^G W'$ where $P$ is a parabolic subgroup containing $L$. 
Since $V^K\neq 0$ there exists at least one irreducible subquotient $V_0$ such that $(V_0)^K\neq 0$.   Let $W_0$ be an unramified twist of $W$ such that 
$V_0$ is contained in $\Ind_P^G W_0$. It follows that $(\Ind_P^G W_0)^K\neq 0$. 
By (1) above, there exists $K'\in K^G$ 
with the Iwahori decomposition such that $(W_0)^{K'_L}\neq 0$. Hence $(W')^{K'_L}\neq 0$ for any unramified twist $W'$ of $W$. Now let $V'$ be any irreducible quotient 
of $V$. From the Frobenius reciprocity 
\[ 
\Hom_G(V', \Ind_P^G (W'))=  \Hom_L(V'_U, W') 
\] 
it follows that $W'$ is a quotient of $V'_U$. Hence 
$(V'_U)^{K'_L}\neq 0$ and $(V')^{K'}\neq 0$ by (2) above. Hence $(V')^K\neq 0$ since $K'$ is conjugate to $K$. 
\end{proof}

\begin{prop}  For every $x\in X$ and $r>0$ the Moy-Prasad group $G_{x,r}$ is nice. 
\end{prop}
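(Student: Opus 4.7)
The plan is to verify niceness directly from the definition. Fix a parabolic $P\supseteq S$ with Levi $L$ chosen to contain $T=Z_G(S)$, unipotent radical $U$, and opposite $\bar U$. Set $K=G_{x,r}$. Every element of $K^G$ has the form $G_{y,r}$ for some $y\in X$ in the $G$-orbit of $x$. The strategy is to exhibit in each $P$-orbit an element of the form $G_{y_0,r}$ with $y_0\in A:=A(S)$, and then read off its Iwahori decomposition by working inside $A$.

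For the first step, I claim that for every $y\in X$ there exists $p\in P$ with $py\in A$. Fix once and for all a sector $\mathfrak{s}\subset A$ whose direction at infinity is a chamber $c$ of the spherical building at infinity whose closure contains the facet $F_P$ stabilized by $P$. By the Bruhat-Tits sector axiom, there is an apartment $A'$ containing $y$ together with a sub-sector of $\mathfrak{s}$, and an element $p\in G$ with $pA'=A$ fixing $A\cap A'$ pointwise. Since $p$ fixes a sector pointwise it also stabilizes its direction $c$ at infinity, so $p$ lies in the minimal $k$-parabolic $B_c$ attached to $c$. Because $F_P\subseteq\overline{c}$, we have $B_c\subseteq P$, hence $p\in P$ and $py\in A$. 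Conjugating $G_{y,r}$ by $p$ produces the $P$-conjugate $G_{py,r}$, reducing us to the case $y\in A$.

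For the second step, I use the explicit description from Section 3 (split case, extended to the general case by unramified Galois descent): $G_{y,r}$ is generated by $T_r$ together with the root groups $U_a$ for affine roots $a\in\Psi$ with $a(y)\geq r$. The partition $\Phi=\Phi_L\sqcup\Phi_U\sqcup\Phi_{\bar U}$ sorts these generators into three packets that generate respectively $G_{y,r}\cap L$, $G_{y,r}\cap U$, and $G_{y,r}\cap\bar U$. The standard argument using a convex ordering on $\Phi$ and the affine Chevalley commutator relations (as in \cite{MP2} Theorem 4.2 for the minimal parabolic) then yields the product decomposition
$$G_{y,r}=(G_{y,r}\cap\bar U)\,(G_{y,r}\cap L)\,(G_{y,r}\cap U).$$
The main obstacle is the Iwasawa-type first step, which rests on the Bruhat-Tits identification of the $G$-stabilizer of a chamber at infinity with the corresponding minimal $k$-parabolic; granted this, the rest reduces to the kind of generators-and-relations computation already carried out in \cite{MP2}.
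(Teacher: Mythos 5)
Your proof is correct, but it takes a genuinely different route to the key reduction. The paper's argument reduces to points in the apartment $A(S)$ via the Iwasawa decomposition $G = P N G_\sigma$ (from \cite{BT1}, Prop.~7.3.1), where $G_\sigma$ is the Iwahori subgroup attached to a chamber containing $x$: since $G_\sigma \subseteq G_x$ normalizes $G_{x,r}$, conjugation by $G_\sigma$ does nothing, conjugation by $N$ keeps you in $A(S)$, and the remaining $P$-factor is absorbed by the definition of $P$-orbit. You instead reduce to $A(S)$ by a purely geometric sector argument: pick a sector in $A(S)$ whose chamber at infinity $c$ satisfies $F_P \subseteq \bar{c}$, use the sector axiom to find an apartment through an arbitrary $y$ sharing a subsector with it, and observe that the resulting retraction element fixes $c$ and hence lies in $B_c \subseteq P$. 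Both arguments ultimately appeal to Theorem~4.2 of \cite{MP2} for the Iwahori factorization of $G_{y,r}$ when $y$ is in the apartment. The trade-off: the paper's route leans on the group-theoretic decomposition $G=PNG_\sigma$ and the normality of $G_{x,r}$ in $G_x$, whereas yours leans on the building-theoretic sector axiom and the identification of stabilizers of chambers at infinity with minimal parabolics. Each is standard Bruhat--Tits material; the paper's version has the advantage of citing a single clean proposition already referenced elsewhere in the text, while yours makes the geometric content of the $P$-conjugacy more transparent. One point worth making explicit if you were to write this up: the isomorphism $A'\to A$ fixing $A\cap A'$ pointwise supplied by the building axiom is realized by an element of $G$ because $G$ acts strongly transitively, which is what lets you conclude $p\in G$ at all.
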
 
\begin{proof} Without loss of generality we can assume that $x\in A(S)$.  Let $P$ be a parabolic subgroup containing $S$. Let $\sigma$ be a chamber in $A(S)$  containing $x$. Then the point-wise stabilizer $G_{\sigma}$ of $\sigma$ is an Iwahori subgroup of $G$.  Let $N$  be the normalizer of $S$ in $G$. 
It acts naturally on $A(S)$.  We have an Iwasawa decomposition (\cite{BT1}, Proposition 7.3.1)
\[ 
 G=P N G_{\sigma}. 
\] 
Since $G_{\sigma}\subset G_{x}$, the Iwahori group $G_{\sigma}$ normalizes $G_{x,r}$, hence representatives of $P$-orbits in the $G$-conjugacy class of $K=G_{x,r}$ can be taken to 
be $K'=G_{x',r}$ where $x'=n(x)\in A(S)$ for some $n\in N$. And these groups have the Iwahori decomposition with respect to $P$, since $P$ contains $S$ by Theorem 4.2 in \cite{MP2}. 
\end{proof} 

\begin{cor} Assume that, for every vertex $x\in X$ there exists a contraction of $C_*(X)$  satisfying the properties (1) and (2). Then the Schneider-Stuhler complex is 
exact for every smooth representation $V$. 
\end{cor}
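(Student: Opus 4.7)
The plan is to combine the Section~4 theorem, which gives exactness of $C_\ast(X,V)^{K_x}$ for every vertex $x$ and every smooth $V$ (using the contraction whose existence at every vertex is the standing hypothesis), with the niceness of the Moy-Prasad groups $K_x$ established in the preceding proposition. The key auxiliary input is that for any compact open subgroup $K \subset G$, the functor $W \mapsto W^K$ is exact on smooth $G$-modules: averaging against a normalized Haar measure on $K$ gives a projector realizing $W^K$ as a canonical direct summand of $W$. In particular, $H_i(C_\ast(X,V))^{K_x} = H_i(C_\ast(X,V)^{K_x})$, and this vanishes for every vertex $x$ by the Section~4 theorem.

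To conclude that $H_i(C_\ast(X,V)) = 0$ for $i \geq 0$, it suffices to show that any nonzero $G$-subquotient of $C_i(X,V)$ has a nonzero $K_y$-fixed vector for some vertex $y$. To set this up, decompose
\[
C_i(X,V) \;=\; \bigoplus_{[\tau]} M_{[\tau]}
\]
as a direct sum of $G$-submodules indexed by the $G$-orbits of $i$-facets, where $M_{[\tau]}$ is generated by $\tau_0 \otimes V^{K_{\tau_0}}$ for a chosen representative $\tau_0$. For $i \geq 0$ every facet has at least one vertex; picking a vertex $y_0$ of $\tau_0$, one has $K_{y_0} \subseteq K_{\tau_0}$ (Section~3), so the generators lie in $(C_i(X,V))^{K_{y_0}}$, and $M_{[\tau]}$ is generated as a $G$-module by $M_{[\tau]}^{K_{y_0}}$. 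Ordering the orbits yields a $G$-stable filtration $0 = F_0 \subset F_1 \subset \cdots \subset C_i(X,V)$ with successive quotients isomorphic to the $M_{[\tau_j]}$'s.

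Given a nonzero $G$-subquotient $H$ of $C_i(X,V)$, the induced filtration on $H$ has at least one nonzero successive quotient, which is a $G$-subquotient of some $M_{[\tau_j]}$. Since $K_{y_j}$ is nice, the preceding proposition applied to $M_{[\tau_j]}$ yields that this nonzero piece has a nonzero $K_{y_j}$-fixed vector. Exactness of $(-)^{K_{y_j}}$ applied to the short exact sequences of the induced filtration of $H$ then lifts this fixed vector to a nonzero $K_{y_j}$-fixed vector in $H$. Taking $H = H_i(C_\ast(X,V))$ produces the required contradiction with the vanishing in the first paragraph, and the Schneider-Stuhler complex is exact. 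The main obstacle is the last step, transferring the niceness conclusion from a single graded piece of the filtration up to the full subquotient $H$; this is where exactness of the invariants functor is essential.
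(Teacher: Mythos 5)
Your proof is correct, and it takes a genuinely different route from the paper's. The paper first decomposes $C_*(X,V)$ by Bernstein components and asserts that each Bernstein summand of the whole complex is generated by $K_x$-fixed vectors for a single vertex $x$, so that exactness of the summand can be tested on $K_x$-invariants; this is compressed, since it tacitly requires one vertex to serve for all degrees and all orbit-summands appearing in that component (in effect, an appeal to the depth-$r$ theory telling us that the $K_x$ for all vertices $x$ cut out the same set of Bernstein components). You bypass Bernstein components entirely: you filter $C_i(X,V)$ by the finitely many $G$-orbits of $i$-facets, observe that each graded piece $M_{[\tau_j]}$ is generated by its $K_{y_j}$-fixed vectors for a vertex $y_j$ of $\tau_j$, apply the niceness proposition to that piece to get a nonzero $K_{y_j}$-fixed vector in a nonzero graded quotient of a putative nonzero homology class $H$, and then lift it to $H^{K_{y_j}}\neq 0$ using exactness of $(-)^{K_{y_j}}$, contradicting $H_i(C_*(X,V)^{K_{y_j}})=0$ from the Section~4 theorem. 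The inputs are the same (exactness of invariants, the theorem on $C_*(X,V)^{K_x}$, niceness of $K_x$), but your argument is more elementary and makes explicit a step that the paper leaves implicit; the paper's version buys a cleaner one-line reduction once the Bernstein-component bookkeeping is in place.

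One small point worth recording explicitly: the niceness proposition is stated for $G_{x,r}$ with $r>0$, whereas here $K_x = G_{x,r,+}$ with $r\ge 0$ allowed; this is fine because $G_{x,r,+}=G_{x,s}$ for $s$ slightly larger than $r$, so $K_x$ is a positive-depth Moy--Prasad group even when $r=0$, and niceness applies.
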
 
\begin{proof} It suffices to prove that the complex is exact in every Bernstein component. 
The complex $C_*(X,V)$ is a direct sum of $G$-modules isomorphic to $\ind_{S_{\tau}}^G  V_{\tau}$ where $\tau$  is a facet in the refined chamber complex and $S_{\tau}$ is the 
stabilizer of $\tau$.  This module is 
generated by $V^{K_{\tau}}=V_{\tau}$.  Let $x$ be a vertex of $\tau$. Since 
$K_x \subseteq K_{\tau}$,  it follows that  $\ind_{G_{\tau}}^G  V_{\tau}$ is generated $K_x$ fixed vectors. 
Thus any Bernstein summand of $C_*(X,V)$ is generated by $K_x$-fixed vectors, for some vertex $x$ and 
exactness can be checked by passing to $K_x$-fixed vectors. 
\end{proof}

\end{document}